\begin{filecontents}[overwrite,noheader]{\jobname.meta-bib}
@preprint
author = {Wong, Willie WY},
title = {Zeroes of Eigenfunctions of Schroedinger Operators after Schwartzman},
keywords = {arxiv-AP},
\end{filecontents}
\documentclass{amsart}

\usepackage{wwwlnbcommon}
\usepackage{wwwdb}
\usepackage[geom,slash,sets,norm,errorterm,pde,miscops,labeled,conv,roman]{www_math_commands}

\usepackage{amsrefs}
\usepackage{hyperref}
\usepackage{cleveref}

\newcommand\dvol{\mathrm{dvol}}
\newcommand\Div{\operatorname{div}}

\counterwithout{equation}{section}

\begin{document}
\title{Zeroes of Eigenfunctions of Schr\"odinger Operators after Schwartzman}
\author{Willie WY Wong}
\address{Department of Mathematics, Michigan State University, East Lansing, Michigan, United States}
\email{wongwwy@math.msu.edu}

\begin{abstract}
	Consider a complete, connected, smooth, oriented Riemannian manifold $(M,g)$ with boundary, such that the first Betti number vanishes. Sol Schwartzman proved that 
	for Schr\"odinger operators of the form $-\Delta_g + V$ where $\Im(V)$ is signed, if $f: M\to\Complex$ is a non-vanishing element of its kernel, then $f$ has constant phase. 
	The proof relied on dynamical systems methods applied to the gradient flow of the phase of $f$. 
	In this manuscript we provide a more direct PDE argument that proves strengthened versions of the same facts.  
\end{abstract}

\maketitle

Throughout let $(M,g)$ be a complete smooth oriented Riemannian manifold, possibly with boundary, and possibly non-compact. 
Write $\dvol_g$ for the metric volume form on $M$, and $\nabla$ for the Levi-Civita connection. 
The operator $\Div$ acting on a rank-1 tensor $\eta$ (either a one form or a vector field) is the metric contraction of $\nabla \eta$.
Denote by $\Delta_g = \Div\circ \nabla$ the (negative) Laplace--Beltrami operator for the metric $g$, and $V \in C^\infty(M;\Complex)$. 
We are interested in the kernel of the Schr\"odinger operator 
\begin{equation}
	L = - \Delta_g + V.
\end{equation}
Specifically, we are interested in necessary constraints for elements of the kernel that are nowhere vanishing on $M$. 
Our main results are that, firstly, when the imaginary part $\Im(V) \geq 0$ and is not identically zero, there do not exist nowhere vanishing elements in the kernel. Secondly, when $\Im(V)$ is identically zero, then under some additional assumptions we find nowhere-vanishing kernel elements must have constant phase. 

Our arguments will be based on an alternative version of a differential identity found by Schwartzman \cite{SS1}*{Lemma 1}. 
For arbitrary 
$f\in C^2(M;\Complex)$, if on some open set $U$ we have $f|_U \neq 0$, then $\frac{1}{f}$ is well-defined on $U$ and is also $C^2$. The following computation is valid on $U$:
\begin{equation}\label{eq:main:identity}
\begin{aligned}
	\mathrm{div}\Bigl( \bar{f}^2 \nabla \bigl( f/\bar{f} \bigr) \Bigr) & = \Div \bigl( \bar{f} \nabla f - f \nabla \bar{f}\bigr) \\
										   &= \bar{f} \Delta_g f - f \Delta_g \bar{f}  \\
										   &= \bar{f}(-L + V)f - f (-\bar{L} + \bar{V})\bar{f} = -2i \Im( \bar{f}\cdot Lf) + 2i \Im(V) \abs{f}^2. 
\end{aligned}
\end{equation}
Observe that in the case $f|_U = \exp \phi$ for some $\phi\in C^2(M;\Complex)$, the quantity
\[ \bar{f}^2 \nabla \bigl(f / \bar{f}\bigr) = e^{2\bar{\phi}} \nabla e^{\phi - \bar{\phi}} = 2 i \abs{f}^2 \nabla(\Im \phi). \]
Schwartzman stated this identity in terms of $\phi$; to ensure $\phi$ exists he assumed the underlying domain has vanishing first Betti number. 
Equation \eqref{eq:main:identity} provides a generalization. 

Since we allow $M$ to be possibly non-compact and possibly with boundary, we will need to restrict to functions with asymptotic / boundary conditions. Below a function $f$ is said to be \emph{admissible} if
\begin{itemize}
	\item $f\in C^2(M;\Complex)$
	\item \emph{(if $\partial M \neq\emptyset$)} $f$ satisfies the Neumann boundary conditions $\nu(\phi) = 0$ where $\nu$ is a normal vector field to the boundary $\partial M$. 
	\item \emph{(if $M$ is non-compact)} both $f$ and $\nabla f$ are square integrable on $M$ against $\dvol_g$. 
\end{itemize}

\begin{rmk}
	We consider the Neumann boundary conditions and not the Dirichlet one as the Dirichlet boundary condition is incompatible with $f$ being nowhere vanishing.
\end{rmk}

Let $\chi$ be a non-negative function of compact support on $M$. Then testing \eqref{eq:main:identity} against $\chi$ we find, for any non-vanishing $f$,
\[ 
	\int_{\partial M} \chi \bar{f}^2 \nu(f/\bar{f}) ~\dvol_{\partial M} - \int_M \bar{f}^2 g(\nabla\chi,\nabla(f/\bar{f})) ~\dvol_g = 2i \int_{M} \Im( - \bar{f} \cdot L f + V\abs{f}^2) \chi ~\dvol_g. 
\]
If we further assume that $f\in \ker(L)$ and is admissible, then this simplifies to 
\begin{equation}
	- \int_M g(\nabla \chi, \bar{f}\nabla f - f \nabla \bar{f}) ~\dvol_g = 2i\int_M \Im(V) \abs{f}^2 \chi ~\dvol_g. 
\end{equation}
Now the standard argument (here we use completeness of $g$) of taking a sequence $\chi_n$ converging (uniformly on compact sets) to the identity with $\norm[\infty]{\nabla \chi_n}\to 0$ shows (using that $\bar{f}\nabla f \in L^1$) that 
\begin{equation}\label{eq:limit}
	\lim_{n\to\infty} \int_M \chi_n \Im(V) |f|^2 ~\dvol_g = 0.
\end{equation}

\begin{thm}
	Assume $\Im(V)$ is non-negative (non-positive) and not identically 0, then any admissible $f\in \ker(L)$ must vanish somewhere. 
\end{thm}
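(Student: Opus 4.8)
The plan is to argue by contradiction. Suppose there is an admissible $f \in \ker(L)$ that is nowhere vanishing on $M$; I treat the case $\Im(V) \geq 0$, the non-positive case being entirely symmetric (replace $f$ by $\bar f$ and $V$ by $\bar V$, since $\bar f \in \ker(\bar L)$ is again admissible and nowhere vanishing while $\Im(\bar V) = -\Im(V) \geq 0$, or simply repeat the argument with the inequalities reversed). Since $f$ is nowhere vanishing, the derivation leading to \eqref{eq:limit} is valid on all of $M$, and we obtain a sequence of non-negative, compactly supported cutoffs $\chi_n$ converging uniformly on compact sets to the constant function $1$, with $\norm[\infty]{\nabla\chi_n}\to 0$, for which
\[ \lim_{n\to\infty} \int_M \chi_n \Im(V) \abs{f}^2 ~\dvol_g = 0. \]

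The key observation is that, under the sign hypothesis $\Im(V) \geq 0$, the integrand $\chi_n \Im(V)\abs{f}^2$ is pointwise non-negative for every $n$. First I would arrange, as is standard for such exhaustion cutoffs, that $0 \leq \chi_n \leq 1$ and that the $\chi_n$ increase monotonically to $1$. The monotone convergence theorem (equivalently, Fatou's lemma) then permits passage to the limit inside the integral, yielding
\[ \int_M \Im(V)\abs{f}^2 ~\dvol_g = \lim_{n\to\infty} \int_M \chi_n \Im(V)\abs{f}^2 ~\dvol_g = 0. \]

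Because $\Im(V)\abs{f}^2 \geq 0$ has vanishing integral, it must vanish $\dvol_g$-almost everywhere. But $f$ is nowhere vanishing, so $\abs{f}^2 > 0$ at every point of $M$; hence $\Im(V) = 0$ almost everywhere. Since $V$ is smooth, $\Im(V)$ is continuous, and a continuous function that vanishes almost everywhere with respect to $\dvol_g$ (a measure of full support) vanishes identically. This contradicts the assumption that $\Im(V)$ is not identically zero, completing the proof.

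The only genuinely delicate step is the interchange of limit and integral, since a priori we do not know that $\Im(V)\abs{f}^2$ is integrable. This is exactly where the one-sided sign condition on $\Im(V)$ is essential: it forces the integrand to keep a fixed sign, so that monotone convergence applies with no integrability hypothesis and no cancellation can conceal a nonzero contribution behind the vanishing limit in \eqref{eq:limit}. The construction of the $\chi_n$ and the verification that they may be taken monotone increasing I expect to be routine, given the completeness of $(M,g)$ already used to establish \eqref{eq:limit}.
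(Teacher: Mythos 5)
Your proposal is correct and follows essentially the same route as the paper: contradiction via \eqref{eq:limit}, using the fixed sign of $\Im(V)$ to force $\Im(V)\abs{f}^2 \equiv 0$ and hence a contradiction with $\Im(V)\not\equiv 0$. You merely make explicit (via monotone convergence/Fatou) the step the paper compresses into ``since $\Im(V)$ is signed, we can conclude,'' which is a reasonable amount of added detail.
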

\begin{proof}
	Suppose not, then $f$ is an admissible element of $\ker(L)$ that is nowhere vanishing. But then \eqref{eq:limit} applies. Since $\Im(V)$ is signed, we can conclude that $\Im(V) |f|^2 \equiv 0$. As $\Im(V)$ is assumed not identically zero, we must have $|f|$ vanishing somewhere, providing a contradiction. 
\end{proof}

\begin{rmk}
	This theorem is stronger than what was recorded by Schwartzman \cite{SS2}*{Theorem 1}. Schwartzman's result only applies to closed manifolds with vanishing first Betti number. Ours apply without any Betti number assumption, and to manifolds that are possibly non-compact and with boundary, without a priori bounds required on $V$. Furthermore, our conclusion rules out completely the possibility of non-vanishing $f$. 
\end{rmk}

We next consider the case where $\Im(V)\equiv 0$, where it was shown under the Betti number assumption any non-vanishing element of $f\in \ker(L)$ must have constant phase \cite{SS1}*{Theorem 2}. 
Before giving our simplified proof, we first observe that, unlike the previous theorem, some topological assumption is necessary. 

\begin{exa}
	Let $(M,g)$ be the unit circle with the standard metric, and let $V \equiv -1$. Then there exists a nowhere-vanishing solution to $L(f) = 0$ with non-constant phase, namely $f(\theta) = \exp i\theta$. 
\end{exa}

In lieu of placing a Betti number assumption on the manifold, we make the assumption that $f$ has a well-defined complex logarithm. (When $M$ has vanishing first Betti number, all non-vanishing $C(M;\Complex)$ functions admit well-defined continuous complex logarithms. As seen in the example above, this is no longer true for general $M$.)

\begin{thm}\label{thm:s1}
	Assume $V$ is real-valued. Suppose $f = \exp\phi$ for some $\phi\in C^2(M;\Complex)$, and that $f$ is admissible and $L(f) = 0$. Then $\Im(\phi)$ is locally constant. 
\end{thm}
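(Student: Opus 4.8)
The plan is to feed the hypotheses into the differential identity \eqref{eq:main:identity}. Since $V$ is real we have $\Im(V)\equiv 0$, and since $L(f)=0$ the entire right-hand side of \eqref{eq:main:identity} vanishes. Using the displayed computation $\bar f^2\nabla(f/\bar f) = 2i\abs{f}^2\nabla(\Im\phi)$, and writing $\psi:=\Im(\phi)$ — a \emph{globally defined} real $C^2$ function, which is exactly where the hypothesis $f=\exp\phi$ enters — together with $w:=\abs{f}^2>0$, the identity collapses to the divergence-form equation
\[ \Div\bigl(w\,\nabla\psi\bigr)=0, \qquad\text{where}\qquad w\,\nabla\psi=\Im\bigl(\bar f\,\nabla f\bigr). \]
The goal is then to deduce that $\nabla\psi\equiv 0$.

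Two preliminary observations set up the weak formulation. On the boundary the Neumann condition $\nu(\phi)=0$ yields $\nu(\psi)=0$, so fluxes of $w\nabla\psi$ through $\partial M$ will drop out. Globally, admissibility gives $f,\nabla f\in L^2$, whence by Cauchy--Schwarz the flux $w\nabla\psi=\Im(\bar f\nabla f)\in L^1(M)$; this is the only integrability I have on the object I intend to test against.

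The naive approach is to test $\Div(w\nabla\psi)=0$ against $\psi$ and integrate by parts to get $\int_M w\abs{\nabla\psi}^2=0$. The obstacle I anticipate — and the only genuinely delicate point — is that on a non-compact $M$ the function $\psi$ need not be bounded or integrable, so the cutoff error term $\int_M \psi\,(w\nabla\psi)\cdot\nabla\chi_n$ is \emph{not} controlled by the $L^1$ bound on the flux alone. To circumvent this I would replace $\psi$ by a bounded function of it: test instead against $\chi_n\,h(\psi)$ with $h=\arctan$, where $\chi_n$ is the same exhaustion used to derive \eqref{eq:limit} (so $\chi_n\to 1$ pointwise and $\norm[\infty]{\nabla\chi_n}\to 0$). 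The merit of $h=\arctan$ is that it is bounded while $h'(x)=(1+x^2)^{-1}>0$.

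Integrating by parts, the $\partial M$ contribution vanishes by $\nu(\psi)=0$, leaving
\[ \int_M \chi_n\,h'(\psi)\,w\,\abs{\nabla\psi}^2~\dvol_g = -\int_M h(\psi)\,\bigl(w\nabla\psi\bigr)\cdot\nabla\chi_n~\dvol_g. \]
Because $\abs{h}\le \pi/2$ and $w\nabla\psi\in L^1$, the right-hand side is bounded by $\tfrac{\pi}{2}\,\norm[\infty]{\nabla\chi_n}\,\norm[1]{w\nabla\psi}\to 0$; meanwhile Fatou's lemma applied to the nonnegative integrand on the left gives $\int_M h'(\psi)\,w\,\abs{\nabla\psi}^2\le \liminf_n(\text{left-hand side})=0$. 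Since $h'(\psi)\,w>0$ pointwise, this forces $\nabla\psi\equiv 0$, i.e.\ $\Im(\phi)$ is locally constant. (When $M$ is compact one may simply test against $\psi$, which is then automatically bounded; the $\arctan$ device is needed only to tame the behaviour at infinity.)
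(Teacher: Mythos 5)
Your argument is correct, and at its core it is the same proof as the paper's: both reduce \eqref{eq:main:identity} to the divergence-form equation $\Div(\abs{f}^2\nabla u)=0$ for $u=\Im(\phi)$ and then test against (a function of) $u$ with compact cut-offs to force $\abs{f}^2\abs{\nabla u}^2\equiv 0$. The one place you depart from the paper is precisely the point you flagged as delicate, and your instinct there is sound. The paper tests directly against $\chi_n u$ and disposes of the error term with the phrase ``a standard approximation argument with compact cut-offs as above''; but the argument ``as above'' (the derivation of \eqref{eq:limit}) only uses that $\bar f\nabla f\in L^1$, whereas the error term here is $\int_M u\,\abs{f}^2\,g(\nabla u,\nabla\chi_n)~\dvol_g$, which carries an extra factor of $u$. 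Admissibility gives $\abs{f}\nabla u\in L^2$ and hence $\abs{f}^2\nabla u\in L^1$, but it gives no bound on $u$ itself, so on a non-compact $M$ this error term is not controlled by $\norm[\infty]{\nabla\chi_n}\cdot\norm[1]{\abs{f}^2\nabla u}$ alone. Your substitution of the bounded test function $\arctan(u)$ for $u$ costs nothing --- the resulting weight $h'(u)\abs{f}^2$ is still strictly positive, so the conclusion $\nabla u\equiv 0$ survives --- and it closes exactly this gap; the Fatou step and the vanishing of the boundary flux via $\nu(u)=\Im\nu(\phi)=0$ are both handled correctly. In short, your write-up is not merely a correct reproduction of the paper's route but a slightly more careful one: on compact $M$ (or whenever $u$ is a priori bounded) the two proofs coincide, and in the general non-compact case yours supplies a truncation that the paper's one-line appeal to the standard argument leaves implicit at best.
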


\begin{proof}
	For convenience write $u = \Im(\phi)$. Note that $\nabla f = f (\nabla \Re\phi + i \nabla u)$, and hence $\abs{\nabla f}^2 = \abs{f}^2 \cdot ( \abs{\nabla \Re\phi}^2 + \abs{\nabla u}^2)$; admissibility of $f$ implies that $\abs{f} \nabla u\in L^2$. 
	Returning to \eqref{eq:main:identity}, we see that it now reads 
	\[ \Div(\abs{f}^2 \nabla u) = 0 .\]
	Multiply by $u$ and integrate by parts (using a standard approximation argument with compact cut-offs as above) we find 
	\[ 0 = \int_{M} \Div(\abs{f}^2 \nabla u)u ~\dvol_g = - \int_M \abs{f}^2 \abs{\nabla u}^2 ~\dvol_g. \]
	As $f$ is by assumption nowhere vanishing, we must have $\nabla u \equiv 0$. 
\end{proof}

\begin{rmk}
	Throughout we made no assumption on connectedness of $M$. If $M$ were assumed to be connected, Theorem \ref{thm:s1} can be upgraded to concluding a globally constant phase $\Im(\phi)$. 
\end{rmk}

\begin{bibdiv}
	\begin{biblist}
\bib{SS1}{article}{
   author={Schwartzman, Sol},
   title={Schr\"odinger operators and the zeros of their eigenfunctions},
   journal={Comm. Math. Phys.},
   volume={306},
   date={2011},
   number={1},
   pages={187--191},
   issn={0010-3616},
   review={\MR{2819423}},
   doi={10.1007/s00220-011-1272-3},
}
\bib{SS2}{article}{
   author={Schwartzman, Sol},
   title={Schr\"odinger operators with a complex valued potential},
   journal={Proc. Amer. Math. Soc.},
   volume={140},
   date={2012},
   number={12},
   pages={4203--4204},
   issn={0002-9939},
   review={\MR{2957209}},
   doi={10.1090/S0002-9939-2012-11367-X},
}
\end{biblist}\end{bibdiv}

\end{document}